\theoremstyle{plain}
\newtheorem{theorem}{Theorem}[section]
\newtheorem{lemma}[theorem]{Lemma}
\theoremstyle{definition}
\theoremstyle{remark}
\newtheorem{notations}[theorem]{Notations}
\newcommand{\ic}{\ensuremath{\mathcal{I}}}
\newcommand{\oc}{\ensuremath{\mathcal{O}}}
\newcommand{\Ps}{\mathbb{P}}
\newcommand{\bC}{\mathbb{C}}
\def\bin #1#2 {\left( \matrix { #1 \cr #2 \cr } \right) }
\begin{document}

\title[Noether-Lefschetz theory  and N\'eron-Severi  group]
{Noether-Lefschetz theory and N\'eron-Severi  group}

\author{Vincenzo Di Gennaro }
\address{Universit\`a di Roma \lq\lq Tor Vergata\rq\rq, Dipartimento di Matematica,
Via della Ricerca Scientifica, 00133 Roma, Italy.}
\email{digennar@axp.mat.uniroma2.it}

\author{Davide Franco }
\address{Universit\`a di Napoli
\lq\lq Federico II\rq\rq, Dipartimento di Matematica e
Applicazioni \lq\lq R. Caccioppoli\rq\rq, P.le Tecchio 80, 80125
Napoli, Italy.} \email{davide.franco@unina.it}

\abstract Let $Z$ be a closed subscheme of a  smooth complex
projective complete intersection variety $Y\subseteq \Ps^N$, with
$dim\,Y=2r+1\geq 3$. We describe the N\'eron-Severi group
$NS_r(X)$ of a general smooth hypersurface $X\subset Y$ of
sufficiently large degree containing $Z$.

\bigskip\noindent {\it{Keywords}}: Noether-Lefschetz Theory, N\'eron-Severi  group,
Milnor fibration, Semistable degenerations.

\medskip\noindent {\it{MSC2000}}\,: 14C25, 14D05, 32S55, 14C20,
14C30, 14M10.

\endabstract
\maketitle

\section{Introduction}

After Noether and Lefschetz \cite{Del} one knows that the
intermediate N\'eron-Severi group (i.e.  the image of the cycle
map $A_r(X)\to H_{m}(X;\mathbb{Z})$, \cite{Fulton}, $\S 19.1$) of
a general hypersurface  $X\subset\Ps^{m+1}=\Ps^{m+1}(\mathbb{C})$,
$m=2r\geq 2 $, is generated by the $r$-th power of the hyperplane
class, as soon as $deg\,X\geq 2+2/r$. On the other hand  a result
of Lopez \cite{Lopez}, inspired by a previous work of Griffiths
and Harris \cite{GH}, gives a recipe for the computation of the
N\'eron-Severi group of a general complex surface of sufficiently
large degree in $\Ps^3$ containing a given smooth curve. One of
the main purposes of our paper is to generalize Lopez's result to
higher N\'eron-Severi groups.

More generally let $Y\subseteq \Ps^N$  be a smooth complex
projective complete intersection of dimension $m+1=2r+1\geq 3$,
$Z$ be a closed subscheme of $Y$, and  $\delta$ be a positive
integer such that $\mathcal I_{Z,Y}(\delta)$ is generated by
global sections. Assume that for $d\gg 0$ the general divisor $X
\in |H^0(Y,\ic_{Z,Y}(d))|$ is smooth. This implies that
$2\,dim\,Z\leq m$ and that, for any $d\geq \delta$, there exists a
smooth hypersurface of degree $d$ containing $Z$ \cite{OS}.
Improving (\cite{OS}, 0.4. Theorem), in \cite{DGF} we proved that
{\it{if $d\geq \delta +1$ then the monodromy representation on
$H^m(X;\mathbb Q)_{\perp Z}^{\text{van}}$ for the family of smooth
divisors $X_t\in |H^0(Y,\mathcal O_Y(d))|$ containing $Z$ as above
is irreducible}} (here we denote by $H^m(X;\mathbb Q)_{\perp
Z}^{\text{van}}$ the orthogonal complement in $H^m(X;\mathbb Q)$
of the subspace $H^m(Y;\mathbb Q)+H^m(X;\mathbb Q)_Z$, where
$H^m(X;\mathbb Q)_Z$ denotes the subspace of $H^m(X;\mathbb Q)$
generated by the cycle classes  of the maximal dimensional
irreducible components of $Z$ if $m=2\,dim\,Z$, and $H^m(X;\mathbb
Q)_Z=0$ otherwise). Such a result does not apply directly to
N\'eron-Severi groups since they involve integral homology, so
this  problem is more subtle.

Unfortunately, the techniques generally used around questions of
this kind (semi-stable degenerations, mixed Hodge structures,
intersection (co)homology, etc.) may have a worse behaviour in
passing from $\mathbb{Q}$ to $\mathbb{Z}$. Our approach consists
in looking at $X$ as the general hyperplane section of a very
special variety with isolated singularities, and to reduce the
problem to a question about the homology groups of such a variety,
which can be expressly computed.

Our first result is the following:
\begin{theorem}
\label{thm1} Let $Y$ and $Z$ be as above, and let $X\in
|H^0(Y,\mathcal O_Y(d))|$ be a general divisor containing $Z$,
with $d\geq \delta +1$. Let $Z_1,\dots,Z_{\rho}$ be the
irreducible components of $Z$ of dimension $r$ (if there are).
Assume that  $h^{i,m-i}(X)\not= 0$ for some $i\not= r$. Then the
N\'eron-Severi group $NS_r(X)$ of $X$ is of rank $\rho +1$ freely
generated by $Z_1,\dots , Z_{\rho}$ and by the class $H_X^r\in
H^m(X;\mathbb Z)$ of the linear section of $X$.
\end{theorem}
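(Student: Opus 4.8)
We treat separately the rational and the integral parts of the statement; the former follows quickly from \cite{DGF}, the latter is the real content.

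\emph{Rational statement.} Write $\iota\colon X\hookrightarrow Y$ and decompose $H^{m}(X;\mathbb{Q})=\iota^{*}H^{m}(Y;\mathbb{Q})\oplus H^{m}(X;\mathbb{Q})_{\mathrm{van}}$ orthogonally for the cup product, refining further $H^{m}(X;\mathbb{Q})_{\mathrm{van}}=H^{m}(X;\mathbb{Q})^{\mathrm{van}}_{Z}\oplus H^{m}(X;\mathbb{Q})^{\mathrm{van}}_{\perp Z}$; all three summands are sub-Hodge-structures. Since $Y$ is a smooth complete intersection of odd dimension $2r+1$, $H^{2r}(Y;\mathbb{Q})$ is one-dimensional of type $(r,r)$, so $\iota^{*}H^{m}(Y;\mathbb{Q})=\mathbb{Q}\,H_{X}^{r}$; and $H^{m}(X;\mathbb{Q})^{\mathrm{van}}_{Z}$ is spanned by the vanishing projections of $[Z_{1}],\dots,[Z_{\rho}]$, which are of type $(r,r)$. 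As $NS_{r}(X)\otimes\mathbb{Q}$ consists of rational Hodge classes, it suffices to check that $H_{X}^{r},[Z_{1}],\dots,[Z_{\rho}]$ are linearly independent and that $H^{m}(X;\mathbb{Q})^{\mathrm{van}}_{\perp Z}$ carries no nonzero Hodge class for general $X$. For independence one notes, pushing forward along $\iota_{*}$, that each $[Z_{i}]$ has a strictly positive component along $H_{X}^{r}$ (compare $\iota_{*}[Z_{i}]=(\deg Z_{i}/\deg Y)\,H_{Y}^{r+1}$ with $\iota_{*}H_{X}^{r}=d\,H_{Y}^{r+1}$), while the $[Z_{i}]$ remain independent modulo $\mathbb{Q}\,H_{X}^{r}$ because $\dim H^{m}(X;\mathbb{Q})^{\mathrm{van}}_{Z}=\rho$, which follows from the analysis of \cite{DGF}. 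For the Hodge-class claim, the hypothesis $h^{i,m-i}(X)\neq 0$ for some $i\neq r$ forces the Hodge structure on $H^{m}(X;\mathbb{Q})^{\mathrm{van}}_{\perp Z}$ to be non-trivial, since $\iota^{*}H^{m}(Y;\mathbb{Q})$ and the $[Z_{j}]$ only contribute to type $(r,r)$ in degree $m$; combined with the irreducibility of the monodromy representation on $H^{m}(X;\mathbb{Q})^{\mathrm{van}}_{\perp Z}$ proved in \cite{DGF}, a Hodge class present for general $X$ would --- being flat --- generate a nonzero constant sub-variation of Hodge structure of type $(r,r)$, hence by irreducibility the whole space, contradicting non-triviality. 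Thus $NS_{r}(X)\otimes\mathbb{Q}=\langle H_{X}^{r},[Z_{1}],\dots,[Z_{\rho}]\rangle_{\mathbb{Q}}$ has dimension $\rho+1$.

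\emph{Integral statement.} Set $L=\langle H_{X}^{r},[Z_{1}],\dots,[Z_{\rho}]\rangle_{\mathbb{Z}}\subseteq NS_{r}(X)$; by the above $L\cong\mathbb{Z}^{\rho+1}$ and $NS_{r}(X)/L$ is finite, so we must show this quotient vanishes, i.e. that $L$ is saturated in $NS_{r}(X)$. Since semistable degenerations and mixed Hodge structures are unreliable over $\mathbb{Z}$, we proceed by realizing $X$ as a general hyperplane section of an auxiliary projective variety $W$ of dimension $m+1$ with at most isolated singularities --- built from $Y$ and the linear system $|H^{0}(Y,\mathcal{I}_{Z,Y}(d))|$, the global generation of $\mathcal{I}_{Z,Y}(\delta)$ together with $d\geq\delta+1$ guaranteeing that the failure of the associated map to be an embedding is concentrated at finitely many points --- and such that the hyperplane sections of $W$ are exactly the divisors of $|H^{0}(Y,\mathcal{O}_{Y}(d))|$ containing $Z$. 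A general $H$ avoids $\mathrm{Sing}(W)$, so $X=W\cap H$ is smooth, and a Lefschetz-type theorem valid with $\mathbb{Z}$-coefficients for this singular $W$ yields $H_{k}(X;\mathbb{Z})\cong H_{k}(W;\mathbb{Z})$ for $k<m$ and an exact sequence $0\to V\to H_{m}(X;\mathbb{Z})\to H_{m}(W;\mathbb{Z})\to 0$, where $V$ is the lattice of vanishing cycles and $V\otimes\mathbb{Q}=H^{m}(X;\mathbb{Q})^{\mathrm{van}}_{\perp Z}$. Since $Z_{1},\dots,Z_{\rho}$ and the linear sections of $X$ already live on $W$, pushing $r$-cycles forward gives a homomorphism $NS_{r}(X)\to NS_{r}(W)$ whose kernel is $NS_{r}(X)\cap V$; a nonzero class there would be an integral Hodge class in $H^{m}(X;\mathbb{Q})^{\mathrm{van}}_{\perp Z}$, excluded exactly as above, so $NS_{r}(X)\hookrightarrow NS_{r}(W)$. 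Computing $H^{m}(W;\mathbb{Z})$ explicitly --- the ``question about the homology groups of $W$'' of the introduction --- one checks that $NS_{r}(W)$ is freely generated by the linear-section class and by $[Z_{1}],\dots,[Z_{\rho}]$, i.e. equals the image of $L$. Hence $L\subseteq NS_{r}(X)\hookrightarrow NS_{r}(W)=L$ with the composite the identity on $L$, forcing $NS_{r}(X)=L\cong\mathbb{Z}^{\rho+1}$, freely generated by $H_{X}^{r}$ and $Z_{1},\dots,Z_{\rho}$, as claimed.

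The main obstacle is precisely the construction and integral analysis of $W$: exhibiting a model of the linear system whose singular locus is genuinely zero-dimensional, establishing the Lefschetz theorem for this singular $W$ with $\mathbb{Z}$-coefficients --- in particular identifying $V\otimes\mathbb{Q}$ with $H^{m}(X;\mathbb{Q})^{\mathrm{van}}_{\perp Z}$ so that the monodromy input of \cite{DGF} applies --- and carrying out the explicit integral computation of $H^{m}(W;\mathbb{Z})$ and of $NS_{r}(W)$, with full control of torsion. By contrast, the rational statement and the elimination of the vanishing part are essentially formal once \cite{DGF} is available.
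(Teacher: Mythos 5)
Your overall architecture is right --- dispose of the rational statement via monodromy irreducibility plus Hodge theory, then reduce the integral statement to the homology of an auxiliary variety with isolated singularities of which $X$ is (essentially) a hyperplane section --- and your rational part is close to the ``standard argument'' the paper invokes at the end of its proof. But the integral part, which you correctly identify as the real content, is left as an announcement at exactly the points where the work lies. First, the auxiliary variety: the image of $Y$ under the map given by $|H^0(Y,\mathcal I_{Z,Y}(d))|$ has no reason to have only isolated singularities (the map is undefined along $Z$, and what happens over $Z$ after resolving is not concentrated at finitely many points in general); your claim to the contrary is unsupported. The paper does something different: it first replaces $Z$ by a complete intersection $\Delta=X\cap G_1\cap\dots\cap G_r$ containing it with exactly one extra component (via Otwinowska--Saito), reduces by induction on $r$, cutting with general hypersurfaces $H_l$ of very large degrees (Theorem \ref{thmb}), to the codimension-one situation $W=X\cap G$ with $G$ smooth of smaller degree --- it is this $W$ that has isolated singularities, by Flenner--O'Carroll--Vogel --- and then works not with $W$ itself but with the blow-up $P_L$ of $Bl_W(Y)$ along the base locus of a general pencil, so that the Gysin map $H_{m+2}(P_L;\mathbb Z)\to H_m(X;\mathbb Z)$ plays the role of your restriction map.

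Second, the two integral facts you assert --- the exact sequence $0\to V\to H_m(X;\mathbb Z)\to H_m(W;\mathbb Z)\to 0$ with controlled torsion, and the explicit determination of $NS_r(W)$ as the free module on the obvious classes --- are precisely the theorems to be proved, not inputs. In the paper the first corresponds to Lemma \ref{coker}, (a) and (b): the image of the Gysin map is exactly $H^m(Y;\mathbb Z)+H^m(X;\mathbb Z)_W$ and its cokernel is torsion free, and proving this requires analysing every singular fibre of the pencil (Milnor fibrations at the isolated critical points, a semistable degeneration at the reducible fibre, and the localization (\ref{localiz})). The second corresponds to the injectivity of the push-forward $H_m(W;\mathbb Z)\to H_m(X;\mathbb Z)$ (Theorem \ref{thma}, (b)), obtained from the kernel bound of Lemma \ref{coker}, (c), together with the count $h_{m+2}(P_L;\mathbb Q)\geq h_m(W;\mathbb Q)+2$. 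Note also that the linear independence of $H_X^r,[Z_1],\dots,[Z_\rho]$, which you attribute to the analysis of \cite{DGF}, is not available there: it is essentially equivalent to this injectivity and is one of the things the present paper proves. So the proposal identifies the right strategy, but the steps you defer are the substance of the proof rather than routine verifications.
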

\noindent Theorem above follows from the more general Theorem
\ref{thmb} which improves the main  result of \cite{DGF}, and also
provides us the appropriate context in order to extend Lopez
Theorem to higher N\'eron-Severi groups:
\begin{theorem}
\label{thm2} Let $Y$ be as above. Let $X,G_1,\dots,G_r$ be a
regular sequence of smooth divisors in $Y$, with $X \in
|H^0(Y,\mathcal O_Y(d))|$, $G_l\in |H^0(Y,\mathcal O_Y(k_l))|$ for
$1\leq l\leq r$, and $d>k_1>\dots>k_r\geq 1$. Let $Z$ be a closed
subscheme of the complete intersection $X\cap G_1\cap\dots \cap
G_r$. Let $X_t\in |H^0(Y,\mathcal O_Y(d))|$ be a general divisor
containing $Z$, and assume that $h^{i,m-i}(X)\not= 0$ for some
$i\not= r$. Denote by $Z_1,\dots,Z_{\rho},R_1,\dots,R_{\sigma}$
the irreducible components of $X_t\cap G_1\cap\dots \cap G_r$,
where $Z_1,\dots,Z_{\rho}$ denote the irreducible components of
$Z$ of maximal dimension $r$. Then  $NS_r(X_t)$  is of rank $\rho
+ \sigma$ freely generated by
$H^r_{X_t},Z_1,\dots,Z_{\rho},R_1,\dots,R_{\sigma -1 }$.
\end{theorem}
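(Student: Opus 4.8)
The plan is to deduce Theorem \ref{thm2} from Theorem \ref{thm1} (more precisely from the general Theorem \ref{thmb} announced in the excerpt), by comparing the two closed subschemes $Z$ and $Z' := X_t \cap G_1 \cap \dots \cap G_r$ of $Y$. First I would observe that, since $Z \subseteq X_t \cap G_1 \cap \dots \cap G_r$ and the $G_l$ are fixed, the general $X_t$ containing $Z$ automatically contains the scheme-theoretic complete intersection $Z'$; conversely $Z'$ is itself a closed subscheme of $Y$ contained in a complete intersection of the required type, and its ideal sheaf $\ic_{Z',Y}(d)$ is globally generated for $d$ large. The key point is that, for the monodromy and Néron–Severi computation, replacing $Z$ by $Z'$ does not change the relevant orthogonal complement: $H^m(X_t;\bQ)_{Z} \subseteq H^m(X_t;\bQ)_{Z'}$, but the extra classes coming from the components $R_1,\dots,R_\sigma$ of $Z'$ are algebraic cycles already present in any hyperplane section of the fixed complete intersection $X_t \cap G_1 \cap \dots \cap G_r$, hence they are controlled. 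I would then apply Theorem \ref{thm1} (or \ref{thmb}) to the pair $(Y, Z')$ to conclude that $NS_r(X_t)$ is freely generated by $H^r_{X_t}$ together with the classes of the $r$-dimensional components of $Z'$, namely $Z_1,\dots,Z_\rho,R_1,\dots,R_\sigma$.

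The second ingredient is to cut down this generating set by one relation. The cycles $R_1,\dots,R_\sigma$ are precisely the irreducible components of the complete intersection $X_t \cap G_1 \cap \dots \cap G_r$ that do not already appear among the $Z_i$. Intersecting with a general linear section, or rather using that $X_t \cap G_1 \cap \dots \cap G_r$ is itself cut out on $X_t$ by the divisors $G_1|_{X_t},\dots,G_r|_{X_t}$ of degrees $k_1,\dots,k_r$, one gets in $H^m(X_t;\bZ)$ a relation of the form $\sum_i m_i [Z_i] + \sum_j n_j [R_j] = (k_1\cdots k_r)\, H^r_{X_t}$ (with $m_i$ the multiplicities of the $Z_i$ in that complete intersection and $n_j$ the multiplicities of the $R_j$, all positive). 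Since at least one $n_j$, say $n_\sigma$, is nonzero, this relation lets me express $[R_\sigma]$ as a $\bQ$-linear — a priori not $\bZ$-linear — combination of $H^r_{X_t}, Z_1,\dots,Z_\rho,R_1,\dots,R_{\sigma-1}$. To upgrade to a statement about the integral lattice $NS_r(X_t)$, I would argue that the map $\bZ^{\rho+\sigma} \to NS_r(X_t)$ sending the standard basis to $(H^r_{X_t},Z_1,\dots,R_{\sigma-1}, R_\sigma)$ has, by Theorem \ref{thmb}, image of full rank with the first $\rho+\sigma-1$ vectors forming a sublattice of finite index; a multiplicity computation (the component $R_\sigma$ of a complete intersection appears with multiplicity one at its generic point, since $X_t$ is general and smooth) shows this index is actually $1$, so $R_\sigma$ is redundant over $\bZ$ and $H^r_{X_t},Z_1,\dots,Z_\rho,R_1,\dots,R_{\sigma-1}$ is a free basis.

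I expect the main obstacle to be precisely this last passage from $\bQ$ to $\bZ$: showing that, after discarding $R_\sigma$, the remaining classes still generate the full integral Néron–Severi lattice and not merely a finite-index sublattice. The rational statement is immediate from the complete-intersection relation, but the integral statement requires knowing that the coefficient of $R_\sigma$ in that relation is a unit, equivalently that $R_\sigma$ is reduced in $X_t \cap G_1 \cap \dots \cap G_r$. This should follow from genericity of $X_t$ together with Bertini-type arguments (the $G_l$ being smooth and the intersection being dimensionally transverse away from $Z$, so that a general member of the linear system through $Z$ meets $G_1 \cap \dots \cap G_r$ transversally along each $R_j$), but it must be handled with care because the base locus $Z$ is fixed and the transversality can fail along $Z$ itself. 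A secondary, more bookkeeping, obstacle is to make sure that the identification of $H^m(X_t;\bZ)_{Z'}$ with the span of $[Z_1],\dots,[R_\sigma]$ — as opposed to a larger or smaller subgroup — is exactly what Theorem \ref{thmb} delivers for the subscheme $Z'$, which requires checking that the hypothesis $h^{i,m-i}(X)\neq 0$ for some $i\neq r$ is preserved (it is, since it only depends on $X$ and $Y$, not on $Z$) and that $d \geq \delta' + 1$ for the relevant $\delta'$ attached to $Z'$, which holds because $\delta' \leq \max(k_1,\dots,k_r) = k_1 < d$.
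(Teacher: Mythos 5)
Your opening move --- replacing $Z$ by the full complete intersection $\Delta:=X_t\cap G_1\cap\dots\cap G_r$ --- is exactly the paper's, but the pivotal step that is supposed to deliver the integral generation statement does not go through as you set it up. Theorem~\ref{thm1} cannot be applied to $Z'=\Delta$: your verification ``$\delta'\leq\max(k_1,\dots,k_r)=k_1<d$'' is false, because the degree-$d$ equation of $X_t$ is an essential generator of $\mathcal I_{\Delta,Y}$, so $\mathcal I_{\Delta,Y}(\delta')$ can be globally generated only for $\delta'\geq d$ and the hypothesis $d\geq\delta'+1$ fails (the paper itself points to this obstruction in the proof of Theorem~\ref{thm1}, where it notes that $Z=\Delta$ would contradict global generation). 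This is not a removable technicality: the conclusion of Theorem~\ref{thm1} applied to $\Delta$ would say that $H^r_{X_t},Z_1,\dots,Z_\rho,R_1,\dots,R_\sigma$ are $\rho+\sigma+1$ \emph{independent} generators, which contradicts the relation $\sum_i m_i[Z_i]+\sum_j n_j[R_j]=k_1\cdots k_r\,H^r_{X_t}$ that you yourself write down in the next paragraph. So your intermediate statement is wrong rather than merely unjustified, and the ``cut down by one relation'' step is repairing a false premise.

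What the paper actually does is prove a separate statement (Theorem~\ref{thmb}) tailored to this boundary case in which one of the hypersurfaces cutting out $\Delta$ has the same degree $d$ as the moving family: for $X_t$ of degree $d$ containing $\Delta$ one gets $NS_r(X_t)=I_\Delta\cap H^m(X_t;\mathbb Z)=H^m(Y;\mathbb Z)+H^m(X_t;\mathbb Z)_\Delta$, and this group is already presented as freely generated by $H^r_{X_t}$ together with all but one component of $\Delta$. Moreover the genuine $\mathbb Q$-versus-$\mathbb Z$ difficulty is not where you locate it. The index-one question for the sublattice obtained by discarding $R_\sigma$ is the easy part: by \cite{Vogel} the conditions (\ref{altOS}) hold, so $dim\,Sing(\Delta)\leq r-1$, $\Delta$ is generically reduced along every component, and every $n_j$ equals $1$. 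The hard integral content, which your proposal delegates entirely to an inapplicable theorem, is (a) the injectivity over $\mathbb Z$ of the push-forward $H_m(\Delta;\mathbb Z)\to H_m(X_t;\mathbb Z)$, reduced by induction on $r$ to Theorem~\ref{thma}(b), i.e.\ to a Betti-number estimate on the blow-up $P_L$; and (b) the fact that $I_\Delta\cap H^m(X_t;\mathbb Z)$ contains nothing beyond the algebraic classes, which rests on the torsion-freeness of $Coker(PD\circ i_t^{\star})$ proved in Lemma~\ref{coker}(b) via Milnor fibres and the semistable fibre of the pencil. Without an argument for (a) and (b), the passage from the rational monodromy statement to the integral N\'eron--Severi lattice is missing.
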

\noindent When $Y=\Ps^3$, and $Z\subseteq Y$ is an irreducible
smooth curve, then previous Theorem \ref{thm2} reduces to the main
result in \cite{Lopez}, of which we now give a different proof.

We note that the proof of the asserted independence of the
generators will follow from the injectivity (proved in Theorem
\ref{thma}) of the push-forward $ H_m(W;\mathbb Z)\to
H_m(X;\mathbb Z)$, where $W=X\cap G$ is a complete intersection
with $G$ smooth hypersurface such that $deg\,G<deg\,X$ (condition
which implies that $W$ has at worst isolated singularities
\cite{Vogel}). We believe this fact of independent interest since
it can be false when $G$ is not smooth, even for nodal $W$'s.
Indeed a general hypersurface  $X\subset \Ps^5$ contains
non-factorial nodal threefolds of the form $G\cap X$, with
$deg\,G<deg\,X$.

\section{Preliminaries}

\begin{notations}\label{notazioni}
(i) Let $Y\subseteq \Ps^N$  be a smooth complex projective
variety, complete intersection in $\Ps^N$, of odd dimension
$m+1=2r+1\geq 3$. Fix integers $1\leq k<d$, and smooth divisors
$G\in |H^0(Y,\mathcal O_Y(k))|$ and $X\in |H^0(Y,\mathcal
O_Y(d))|$. $X$ is a smooth complete intersection, hence
$H^m(X;\mathbb Z)$ is torsion free and  $H^m(X;\mathbb Z)\subset
H^m(X;\mathbb Q)$. Put $W:=G\cap X$. By (\cite{Vogel}, p. 133,
Proposition 4.2.6. and proof) we know that $W$ has only isolated
singularities.

(ii) For any smooth divisor $X_t\in |H^0(Y,\mathcal O_Y(d))|$ we
will denote by $PD$  the Poincar\'e dualities $H_m(X_t;\mathbb
Z)\cong H^m(X_t;\mathbb Z)$ and $H_m(X_t;\mathbb Q)\cong
H^m(X_t;\mathbb Q)$.

(iii) Let $P=Bl_W(Y)$ be the blowing-up of $Y$ along $W$. We refer
to \cite{DGF}, 4.1, for a geometric description of $P$. Here we
recall some facts. $P$ has  at worst isolated singularities, which
are isolated hypersurface singularities because $P$ is a divisor
in the smooth variety $\Ps(\oc_{Y}(k)\oplus\oc_{Y}(d))$. For the
strict transform $\tilde G$ of $G$ in $P$ we have $\tilde G\cong
G$, and $G\subseteq P\backslash Sing(P)$. The same holds for any
smooth divisor $X_t\in |H^0(Y,\mathcal O_Y(d))|$ containing $W$.

(iv) Let $\{X_t\}_{t\in L}$ ($L=\Ps^1$) be a general pencil of
divisors $X_t\in |H^0(Y,\mathcal O_Y(d))|$ containing $W$, and let
$B_L$ be its base locus (apart from $W$), which we may consider as
contained in $P\backslash Sing(P)$. Notice that $B_L\cong X_t\cap
M_L$ for a suitable general $M_L\in|H^0(Y,\oc_{Y}(d-k))|$. Denote
by $P_L$ the blowing-up of $P$ along $B_L$, and consider the
natural map $f:P_L\to L$. Let $A\subseteq L$ be the set of the
critical values of $f$. This is a finite set, and there is a point
$a_{\infty}\in A$ corresponding to the unique reducible fibre of
$f$. For any $t\in L\backslash \{a_{\infty}\}$ we have
$f^{-1}(t)\cong X_t\in |H^0(Y,\mathcal O_Y(d))|$. When $a\in
A\backslash \{a_{\infty}\}$ then $X_a$ is an irreducible divisor
with a unique singular point $q_a\in P_L$, and $Sing(P_L)\subseteq
\{q_a\,:\, a\in A\backslash \{a_{\infty}\}\}$.

(v) Fix two regular values $t\,,t_1\in L\backslash A$, with $t\neq
t_1$, and  for any critical value $a\in A$ of $L$ fix a closed
disk $\Delta_a\subset L\backslash\{t\} \cong \bC$ with center $a$
and radius $0<\rho \ll 1$. As in \cite{La}, (5.3.1) and (5.3.2),
one  proves that
\begin{equation}\label{localiz}
H_{*}(P_L \backslash X_t,X_{t_1} ; \mathbb{Z})\cong{\oplus}_{a\in
A} H_{*} (f^{-1}(\Delta_a),X_{a+\rho} ; \mathbb{Z}).
\end{equation}

(vi)  Fix $a\in A\backslash \{a_{\infty}\}$. By \cite{Loj}, p. 28,
we know that near to the isolated singular point $q_a\in P_L$ the
pencil $f: P_L\to L$ defines a Milnor fibration with Milnor fiber
$X_{a+\rho}\cap D_a$, where $D_a$ denotes a closed ball of
$\Ps(\oc_{Y}(k)\oplus\oc_{Y}(d))$ with center $q_a$ and  small
radius $\epsilon$ with $\rho << \epsilon<< 1$. The Milnor fiber
$X_{a+\rho}\cap D_a$ has the homotopy type of a bouquet of
$m-$spheres. In particular
\begin{equation} \label{dMilnorfiber}
H_{m}(X_{a+\rho}\cap D_a ;\mathbb{Z}) \,\,\,{\text{is torsion
free, and}}\,\,\, H_{m+1}(X_{a+\rho}\cap D_a ;\mathbb{Z})=0.
\end{equation}
Moreover, since $f^{-1}(\Delta_a)\backslash D^{\circ}_a\to
\Delta_a$ is a trivial fibre bundle ($D^{\circ}_a$:= interior of
$D_a$), Excision Axiom and Leray-Hirsch Theorem \cite{Spanier}
imply that the inclusion $(X_{a+\rho}, X_{a+\rho}\cap
D_a)\subseteq (f^{-1}(\Delta_a),f^{-1}(\Delta_a)\cap D_a)$ induces
a natural isomorphism $H_{*}(X_{a+\rho}, X_{a+\rho}\cap D_a
;\mathbb{Z})\cong H_{*}(f^{-1}(\Delta_a),f^{-1}(\Delta_a)\cap D_a
;\mathbb{Z})$. Therefore, from the conic structure of
$f^{-1}(\Delta_a)\cap D_a$ (\cite{Loj}, Lemma (2.10)), we deduce a
natural isomorphism for any $l>0$
\begin{equation}
\label{tMilnorfiber} H_{l}(X_{a+\rho}, X_{a+\rho}\cap D_a
;\mathbb{Z})\cong H_{l}(f^{-1}(\Delta_a);\mathbb{Z}).
\end{equation}

(vii) The fibre $f^{-1}(a_{\infty})$ is the union $\widetilde{G}
\cup \widetilde{M_L}$ of the strict transforms of $G$ and $M_L$ in
$P_L$. Since  $\widetilde{G}\cong G$ and $\widetilde{M_L}$ is
isomorphic to the blowing-up of $M_L$  along the smooth complete
intersection $M_L\cap W$, and $\widetilde{G} \cap
\widetilde{M_L}\cong G\cap M_L$, then the map
$f^{-1}(\Delta_{a_{\infty}})\to \Delta_{a_{\infty}}$ is a
semi-stable degeneration. Hence $f^{-1}(\Delta_{a_{\infty}})$
retracts onto $f^{-1}({a_{\infty}})$ (\cite{PS}, p. 185), and  we
have
\begin{equation}
\label{retract} H_{*}(f^{-1}(\Delta_{a_{\infty}})
;\mathbb{Z})\cong H_{*}(f^{-1}({a_{\infty}});\mathbb{Z}).
\end{equation}

(viii) For any regular value $t\in L\backslash A$ the natural
inclusion map $i_t:X_t\to P_L$ induces Gysin maps $i_t^{\star}:
H_{m+2}(P_L;\mathbb Z)\to H_{m}(X_t;\mathbb Z)$ and $i_{t,\mathbb
Q}^{\star}: H_{m+2}(P_L;\mathbb Q)\to H_{m}(X_t;\mathbb Q)$. Now
let $I_W$ be  the subspace of the invariant cocycles in
$H^m(X_t;\mathbb Q)$ with respect to the monodromy representation
on $H^m(X_t;\mathbb Q)$ for the family of smooth divisors $X_t\in
|H^0(Y,\mathcal O_Y(d))|$ containing $W$. Denote by $i_W:W\to X_t$
the inclusion map. It induces push-forward maps ${i_{W}}_*:
H_m(W;\mathbb Z)\to H_m(X_t;\mathbb Z)$ and ${i_{W,\mathbb{Q}}}_*:
H_m(W;\mathbb Q)\to H_m(X_t;\mathbb Q)$. Put $H^m(X_t;\mathbb
Z)_W:= Im(PD\circ {i_{W}}_*)$, and $H^m(X_t;\mathbb Q)_W:= Im(PD
\circ {i_{W,\mathbb{Q}}}_*)$. Observe that $H^m(Y;\mathbb Z)+
H^m(X_t;\mathbb Z)_W$ is contained in $Im(PD\circ i_t^{\star})$.
By \cite{DGF}, (11), we also know that
\begin{equation}\label{inv}
I_W=H^m(Y;\mathbb Q)+ H^m(X_t;\mathbb Q)_W.
\end{equation}
\end{notations}

\begin{lemma}\label{coker} The following properties hold:
(a) $Im(PD\circ i_t^{\star})=H^m(Y;\mathbb Z)+ H^m(X_t;\mathbb
Z)_W$; (b) $Coker(PD\circ i_t^{\star})$ is torsion free; (c)
$dim_{\mathbb Q}\,Ker(PD\circ i_{t,\mathbb Q}^{\star})\leq 2$.
\end{lemma}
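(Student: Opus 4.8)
The plan is to analyze the Gysin map $i_t^{\star}$ through the Lefschetz pencil $f:P_L\to L$ and the localization isomorphism \eqref{localiz}, splitting the contributions of the singular fibres $X_a$, $a\in A\setminus\{a_\infty\}$, from that of the reducible fibre $X_{a_\infty}$. First I would recall that the vanishing homology/image of the Gysin map is controlled by the relative groups $H_*(P_L\setminus X_t, X_{t_1};\mathbb Z)$; by \eqref{localiz} this decomposes as a direct sum over $a\in A$. For $a\neq a_\infty$, the Milnor fibration picture \eqref{dMilnorfiber}–\eqref{tMilnorfiber} tells us that each local contribution is torsion free (a bouquet of $m$-spheres), and that $H_{m+1}$ of the local piece vanishes; for $a=a_\infty$, the semistable retraction \eqref{retract} replaces $f^{-1}(\Delta_{a_\infty})$ by the normal-crossing fibre $\widetilde G\cup\widetilde{M_L}$, whose homology I would compute via a Mayer–Vietoris sequence built from $\widetilde G\cong G$, $\widetilde{M_L}$ (a blow-up of $M_L$ along the smooth complete intersection $M_L\cap W$), and $\widetilde G\cap\widetilde{M_L}\cong G\cap M_L$. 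The point of (b) is that each summand contributes no torsion to the relevant cokernel, and of (c) that the total rank of the kernel over $\mathbb Q$ picks up at most a $2$-dimensional defect coming solely from the $a_\infty$-fibre (the extra component, plus the blow-up divisor, giving the two ``exceptional'' classes, which over $\mathbb Q$ collapse up to the statement's bound).

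For part (a), I would argue both inclusions. The inclusion $H^m(Y;\mathbb Z)+H^m(X_t;\mathbb Z)_W\subseteq Im(PD\circ i_t^{\star})$ is already noted in Notations \ref{notazioni}(viii), so the work is the reverse inclusion. Tensoring with $\mathbb Q$, the image of $i_{t,\mathbb Q}^{\star}$ lands in the invariant classes $I_W$, which by \eqref{inv} equals $H^m(Y;\mathbb Q)+H^m(X_t;\mathbb Q)_W$; hence $Im(PD\circ i_t^{\star})$ sits inside $I_W$ integrally, i.e. inside the saturation of $H^m(Y;\mathbb Z)+H^m(X_t;\mathbb Z)_W$ in the torsion-free group $H^m(X_t;\mathbb Z)$. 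It then remains to see there is no ``index gap'': any integral class in $Im(PD\circ i_t^{\star})$ that is rationally in $H^m(Y;\mathbb Q)+H^m(X_t;\mathbb Q)_W$ is already in $H^m(Y;\mathbb Z)+H^m(X_t;\mathbb Z)_W$. This is where I would feed in that the classes of $Y$ (powers of the hyperplane section) and the integral push-forwards from $W$ generate a \emph{saturated} subgroup — using that $H^m(X_t;\mathbb Z)$ is unimodular under the intersection pairing and that $H_m(W;\mathbb Z)\to H_m(X_t;\mathbb Z)$ has torsion-free cokernel, which is exactly the content foreshadowed for Theorem \ref{thma}. Granting that injectivity/saturation input, (a) follows; and once (a) holds, (b) is immediate because then $Coker(PD\circ i_t^{\star})=H^m(X_t;\mathbb Z)/\bigl(H^m(Y;\mathbb Z)+H^m(X_t;\mathbb Z)_W\bigr)$, a quotient of a torsion-free group by a saturated subgroup.

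For part (c) I would return to the exact sequence of the pair $(P_L, X_t)$ (equivalently of $P_L\setminus X_t$) and track ranks. The kernel of $i_{t,\mathbb Q}^{\star}$ is a subquotient of $\bigoplus_{a\in A}H_{m+1}(f^{-1}(\Delta_a), X_{a+\rho};\mathbb Q)$ — more precisely it is governed by the cokernel of $H_{m+2}(P_L;\mathbb Q)$ mapping into $H_m(X_t;\mathbb Q)$, which is the map whose defect is measured by the $a_\infty$-term together with the $B_L$-blow-up. For $a\neq a_\infty$ the relative groups $H_{m+1}(f^{-1}(\Delta_a);\mathbb Q)$ vanish by \eqref{tMilnorfiber} and \eqref{dMilnorfiber}, so they contribute $0$. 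For $a=a_\infty$, \eqref{retract} plus the Mayer–Vietoris computation of $H_*(\widetilde G\cup\widetilde{M_L};\mathbb Q)$ shows the relevant relative group has dimension at most $2$: one class is killed going from two fibre components to a connected smooth fibre (the difference $[\widetilde G]-\lambda[\widetilde{M_L}]$ type relation), and one more from the exceptional divisor of the blow-up along $B_L$. Assembling, $\dim_{\mathbb Q}Ker(PD\circ i_{t,\mathbb Q}^{\star})\leq 2$.

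The main obstacle I expect is part (a): controlling the integral image rather than the rational one. Rationally everything reduces cleanly to \eqref{inv}, but upgrading to $\mathbb Z$ requires the non-trivial saturation statement that $H^m(Y;\mathbb Z)+H^m(X_t;\mathbb Z)_W$ is already saturated in $H^m(X_t;\mathbb Z)$ — equivalently that the push-forward $H_m(W;\mathbb Z)\to H_m(X_t;\mathbb Z)$ has torsion-free cokernel. That is precisely the assertion of Theorem \ref{thma}, and whatever delicate argument (about the Milnor fibres being \emph{integral} bouquets of spheres and the degeneration being genuinely semistable, so the nearby-cycle computations behave integrally) is needed there will be the crux; once it is available, (a), (b), (c) fall out as described.
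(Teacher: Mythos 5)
There is a genuine gap, and it is concentrated in part (a). You reduce the inclusion $Im(PD\circ i_t^{\star})\subseteq H^m(Y;\mathbb Z)+H^m(X_t;\mathbb Z)_W$ to the statement that $H^m(Y;\mathbb Z)+H^m(X_t;\mathbb Z)_W$ is saturated in $H^m(X_t;\mathbb Z)$, and you explicitly defer that saturation to Theorem \ref{thma}. But in the paper the logical order is the reverse: Theorem \ref{thma}(a) (which \emph{is} the saturation statement, via (\ref{inv})) is deduced \emph{from} Lemma \ref{coker}(a) and (b). So your argument for (a) is circular — you are assuming the downstream theorem to prove the lemma it rests on — and you acknowledge yourself that you have no independent proof of the saturation (``whatever delicate argument is needed there will be the crux''). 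The paper's actual proof of (a) needs no saturation and works directly over $\mathbb Z$: one chases the commutative diagram relating $H_{m+2}(P_L;\mathbb Z)$ and $H_{m+2}(Y;\mathbb Z)$ through restriction to the open set $Y\backslash(W\cup(X_t\cap M_L))$ in Borel--Moore homology, observes that the discrepancy between $(PD)^{-1}(c)$ and the class coming from $Y$ lies in $Ker(\rho_3)=Im\bigl(H_m(W\cup(X_t\cap M_L);\mathbb Z)\to H_m(X_t;\mathbb Z)\bigr)$, and then kills that image using Mayer--Vietoris for $W\cup(X_t\cap M_L)$ together with the fact that $H_m(X_t\cap M_L;\mathbb Z)$ is generated by the linear section class. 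That concrete step is what is missing from your proposal.

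A secondary problem: you claim that once (a) holds, (b) is ``immediate'' because the cokernel is a quotient by a saturated subgroup — but the saturation is again exactly the unproved input, not a consequence of (a); (a) only identifies the image, while (b) is the additional primitivity assertion, and the two are proved by independent arguments in the paper. Your first paragraph does sketch the correct mechanism for (b) (torsion-freeness of $H_{m+1}(P_L\backslash X_t;\mathbb Z)$ via the localization (\ref{localiz}), the integral bouquet structure of the Milnor fibres (\ref{dMilnorfiber})--(\ref{tMilnorfiber}), and Mayer--Vietoris on the semistable fibre $\widetilde G\cup\widetilde{M_L}$), and your rank count for (c) is essentially the paper's (at most one dimension from the relative group at $a_\infty$ plus one from $H_{m+2}(X_{t_1};\mathbb Q)\cong\mathbb Q$). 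So (b) and (c) are recoverable from your outline, but (a) — and hence the lemma as a whole — is not proved.
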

\begin{proof}

(a) We only have to prove that
\begin{equation}\label{tincl}
Im(PD\circ i_t^{\star})\subseteq H^m(Y;\mathbb Z)+ H^m(X_t;\mathbb
Z)_W.
\end{equation}
Denote by $\widetilde W$ and $\widetilde{B_L}$ the inverse images
of $W\subseteq Y$ and $B_L\subseteq Bl_W(Y)$ in $P_L$. The map
$P_L\to Y$ induces an isomorphism
$\alpha_1:P_L\backslash(\widetilde W\cup \widetilde{B_L})\to
Y\backslash (W\cup(X_t\cap M_L))$. Consider the following natural
commutative diagram:
$$
\begin{array}{ccc}
H_{m+2}(P_L;\mathbb Z)&\stackrel{\rho_1}{\to}& H_{m+2}(P_L\backslash(\widetilde W\cup \widetilde{B_L});\mathbb Z) \\
\stackrel \alpha{}\downarrow &     &  \Vert\stackrel {\alpha_1}{}\\
H_{m+2}(Y;\mathbb Z)&\stackrel{\rho_2}{\to}& H_{m+2}(Y\backslash (W\cup(X_t\cap M_L));\mathbb Z) \\
\stackrel \beta{}\downarrow &     &  \downarrow\stackrel {\beta_1}{} \\
H_m(X_t;\mathbb Z)&\stackrel{\rho_3}{\to}& H_m(X_t\backslash (W\cup(X_t\cap M_L));\mathbb Z) \\
\end{array}
$$
where $\alpha$ and $\alpha_1$ are push-forward maps, $\beta$ and
$\beta_1$ are Gysin maps, and $\rho_1$, $\rho_2$ and $\rho_3$ are
restriction maps to open subset in Borel-Moore homology
(\cite{Fulton}, p. 371). Fix $c\in Im(PD\circ i_t^{\star})$, and
let $c'\in H_{m+2}(P_L;\mathbb Z)$ such that $(PD\circ
i_t^{\star})(c')=c$. Since
$\beta_1\circ\alpha_1\circ\rho_1=\rho_3\circ i_t^{\star}$ then we
have $\rho_3((PD)^{-1}(c))= (\rho_3\circ\beta\circ\alpha)(c')$.
Hence we have $(PD)^{-1}(c)-\beta(\alpha(c'))\in Ker(\rho_3)=
Im(H_m(W\cup(X_t\cap M_L);\mathbb Z)\to H_m(X_t;\mathbb Z))$
(\cite{Fulton}, p. 371, (6)). So to prove  (\ref{tincl}) it
suffices to prove that $Im(H_m(W\cup(X_t\cap M_L);\mathbb Z)\to
H_m(X_t;\mathbb Z)\cong H^m(X_t;\mathbb Z))$ is contained in
$H^m(Y;\mathbb Z)+H^m(X_t;\mathbb Z)_W$. Since $W$ has only
isolated singularities, and  $M_L$ is general, then $W\cap M_L$
and $X_t\cap M_L$  are smooth complete intersections in $\Ps^N$,
of dimension $m-2$ and $m-1$. In particular $H_{m-1}(W \cap
M_L;\mathbb Z)=0$. From the Mayer-Vietoris sequence of the pair
$(W, X_t\cap M_L)$ we deduce that the natural map $H_{m}(W;\mathbb
Z)\oplus H_{m}(X_t\cap M_L;\mathbb Z) \to H_{m}(W\cup(X_t\cap
M_L);\mathbb Z)$ is surjective. Therefore to prove (\ref{tincl})
it suffices to prove that $Im(H_m(X_t\cap M_L;\mathbb Z)\to
H_m(X_t;\mathbb Z)\cong H^m(X_t;\mathbb Z))$ is contained in
$H^m(Y;\mathbb Z)$. This is obvious because $H_m(X_t\cap
M_L;\mathbb Z)$ is generated by the linear section class.

\smallskip
(b) Since $X_t\subseteq P_L\backslash Sing(P_L)$, using Excision
Axiom and Duality  Theorem (\cite{Spanier}, p. 296) we get a
natural isomorphism $H_{m+2}(P_L,P_L\backslash X_t;
\mathbb{Z})\cong H^{m}(X_t; \mathbb{Z})$. Therefore $PD\circ
i_t^{\star}$ identifies with the second map of the natural exact
sequence
\begin{equation}\label{seq}
H_{l}(P_L\backslash X_t; \mathbb{Z})\to H_{l}(P_L; \mathbb{Z})\to
H_{l}(P_L,P_L\backslash X_t; \mathbb{Z})\to H_{l-1}(P_L\backslash
X_t; \mathbb{Z})
\end{equation}
when $l=m+2$, and the proof of (b) amounts to show that
$H_{m+1}(P_L\backslash X_t; \mathbb{Z})$ is torsion free. Since
$H_{m+1}(X_{t_1}; \mathbb{Z})=0$, from the natural exact sequence
\begin{equation}\label{dseq}
H_{l}(X_{t_1}; \mathbb{Z})\to H_{l}(P_L\backslash X_t;
\mathbb{Z})\to H_{l}(P_L\backslash X_t, X_{t_1}; \mathbb{Z})
\end{equation}
we see that $H_{m+1}(P_L\backslash X_t; \mathbb{Z})$ is contained
in $H_{m+1}(P_L\backslash X_t, X_{t_1}; \mathbb{Z})$. Hence, by
(\ref{localiz}),  to prove (b) it is enough to show that $H_{m+1}
(f^{-1}(\Delta_a),X_{a+\rho} ; \mathbb{Z})$ is torsion free for
any $a\in A$. To this aim, consider the exact sequence of the pair
$(f^{-1}(\Delta_a),X_{a+\rho})$
\begin{equation}\label{teq}
H_{l} (X_{a+\rho}; \mathbb{Z})\to H_{l} (f^{-1}(\Delta_a);
\mathbb{Z}) \to H_{l} (f^{-1}(\Delta_a),X_{a+\rho}; \mathbb{Z})\to
H_{l-1} (X_{a+\rho}; \mathbb{Z}).
\end{equation}
Since $H_{m+1} (X_{a+\rho}; \mathbb{Z})=0$ and $H_{m} (X_{a+\rho};
\mathbb{Z})$ is torsion free then it suffices to prove  that
\begin{equation}\label{suff}
H_{m+1} (f^{-1}(\Delta_a); \mathbb{Z})\,\, {\text{is torsion free
for any $a\in A$}}.
\end{equation}

When $a\neq a_{\infty}$ this follows by (\ref{dMilnorfiber}) and
(\ref{tMilnorfiber}), taking into account the natural exact
sequence of the pair $(X_{a+\rho},X_{a+\rho}\cap D_a)$
\begin{equation}\label{qeq}
H_{l} (X_{a+\rho}; \mathbb{Z})\to H_{l}(X_{a+\rho},X_{a+\rho}\cap
D_a ;\mathbb{Z})\to H_{l-1} (X_{a+\rho}\cap D_a; \mathbb{Z}),
\end{equation}
and that $H_{m+1} (X_{a+\rho}; \mathbb{Z})=0$.

When $a=a_{\infty}$ by (\ref{retract}) we see that to prove
(\ref{suff}) it is enough to prove that $H_{m+1}
(f^{-1}(a_{\infty}); \mathbb{Z})$ is torsion free. This follows
from the Mayer-Vietoris sequence
\begin{equation}\label{Mayer}
H_{l} (G; \mathbb{Z})\oplus H_{l} (\widetilde{M_L}; \mathbb{Z})\to
H_{l} (f^{-1}(a_{\infty}); \mathbb{Z})\to H_{l-1} (G\cap M_L;
\mathbb{Z})
\end{equation}
because $H_{m} (G\cap M_L; \mathbb{Z})$ is torsion free, $H_{m+1}
(G; \mathbb{Z})=0$, and  $H_{m+1} (\widetilde{M_L}; \mathbb{Z})$
$=0$ in view of the decomposition (\cite{Voisin}, p. 170,
Th\'eor\`eme 7.31)
\begin{equation}\label{Blowdec}
H_{l} (\widetilde{M_L}; \mathbb{Z})\cong H_{l} ({M_L};
\mathbb{Z})\oplus H_{l-2} ({M_L\cap W}; \mathbb{Z}).
\end{equation}

\smallskip
(c) By the sequence (\ref{seq}) (tensored with $\mathbb Q$) we see
that to prove (c) it is enough to show that $h_{m+2}(P_L\backslash
X_t; \mathbb{Q})\leq 2$. Hence, since $H_{m+2} (X_{t_1} ;
\mathbb{Z})\cong \mathbb Z$, by  (\ref{dseq}) it suffices to prove
that $h_{m+2}(P_L\backslash X_t, X_{t_1}; \mathbb{Q})\leq 1$.
Combining (\ref{dMilnorfiber}) and (\ref{tMilnorfiber}) with the
exact sequences (\ref{teq}) and (\ref{qeq}) with $l=m+2$, and with
the fact that $H_{m+1} (X_{a+\rho} ; \mathbb{Z})=0$, we get
$H_{m+2} (f^{-1}(\Delta_a),X_{a+\rho} ; \mathbb{Q})=0$ for any
$a\in A\backslash\{a_{\infty}\}$. So, in view of the decomposition
(\ref{localiz}), the proof of (c) amounts to prove that $h_{m+2}
(f^{-1}(\Delta_{a_{\infty}}),X_{a_{\infty}+\rho} ; \mathbb{Q})\leq
1$. To this aim observe that since $H_{m+2} ( X_{a_{\infty}+\rho}
; \mathbb{Q})$ is generated by the linear section class then the
first map in (\ref{teq}) with $l=m+2$  is injective. It follows
that $h_{m+2} (f^{-1}(\Delta_{a_{\infty}}),X_{a_{\infty}+\rho} ;
\mathbb{Q})=h_{m+2} (f^{-1}(\Delta_{a_{\infty}}); \mathbb{Q})-1$
for $H_{m+1} (X_{a_{\infty}+\rho} ; \mathbb{Z})=0$. Therefore, by
(\ref{retract}), we see that it is enough to prove that $h_{m+2}
(f^{-1}({a_{\infty}}); \mathbb{Q})=2$. This follows by the
sequence (\ref{Mayer}) with $l=m+2$, taking into account that the
kernel of its first map is $H_{m+2} (G\cap {M_L}; \mathbb{Z})\cong
\mathbb Z$, that $H_{m+1} (G\cap {M_L}; \mathbb{Z})=0$, and that $
h_{m+2} (G; \mathbb{Q})+h_{m+2} (\widetilde{M_L}; \mathbb{Q})=3$
in view of (\ref{Blowdec}).
\end{proof}

\begin{theorem}
\label{thma} Let $Y\subseteq \Ps^N$ be  a smooth complete
intersection of odd dimension $m+1=2r+1\geq 3$. Fix integers
$1\leq k<d$, and smooth divisors $G\in |H^0(Y,\mathcal O_Y(k))|$
and $X\in |H^0(Y,\mathcal O_Y(d))|$. Put $W:= X\cap G$. Then the
following properties hold:

(a) $I_W\cap H^m(X;\mathbb Z)=H^m(Y;\mathbb Z)+H^m(X;\mathbb Z)_W$
(see  Notations \ref{notazioni}, (viii));

(b) the push-forward ${i_{W}}_*: H_m(W;\mathbb Z)\to H_m(X;\mathbb
Z)$ is injective.
\end{theorem}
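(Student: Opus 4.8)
The plan is to deduce both statements from Lemma \ref{coker}, using the blow-up picture from Notations \ref{notazioni} to control the integral homology of $P_L$ and the relation between $X$, $W$, and the linear section. I would first set $X_t = X$ (the general divisor in the pencil being one of our $X$'s) and recall that by Lemma \ref{coker}(a) the image of $PD\circ i_t^\star$ is exactly $H^m(Y;\mathbb Z)+H^m(X;\mathbb Z)_W$, while (\ref{inv}) identifies the rational invariant subspace $I_W$ with $H^m(Y;\mathbb Q)+H^m(X;\mathbb Q)_W$. The inclusion $H^m(Y;\mathbb Z)+H^m(X;\mathbb Z)_W \subseteq I_W\cap H^m(X;\mathbb Z)$ is immediate. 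For the reverse inclusion in (a), take $c\in I_W\cap H^m(X;\mathbb Z)$. Then $c$ lies in the $\mathbb Q$-span of $Im(PD\circ i_t^\star)$, so some multiple $nc$ lies in $Im(PD\circ i_t^\star)$; but by Lemma \ref{coker}(b) the cokernel of $PD\circ i_t^\star$ is torsion free, whence $c\in Im(PD\circ i_t^\star)=H^m(Y;\mathbb Z)+H^m(X;\mathbb Z)_W$. This proves (a).

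For (b), I would argue that ${i_W}_*: H_m(W;\mathbb Z)\to H_m(X;\mathbb Z)$ is already injective over $\mathbb Q$ and that the target is torsion free, so injectivity over $\mathbb Z$ follows once one checks there is no kernel after tensoring with $\mathbb Q$. Rational injectivity should come from the fact that the composite $PD\circ {i_{W,\mathbb Q}}_*$ has image $H^m(X;\mathbb Q)_W$, a subspace of the invariant cocycles on which the intersection form is nondegenerate: indeed $W = X\cap G$ with $G$ a smooth divisor of degree $k<d$, so $H_m(W;\mathbb Q)$ is spanned by the linear section class together with vanishing (co)homology of $W$, and the Hodge--Riemann bilinear relations on the smooth part / the Hard Lefschetz package for the complete intersection $W$ force the restriction of the intersection pairing of $X$ to $Im({i_{W,\mathbb Q}}_*)$ to be nondegenerate. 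Concretely: if $\gamma\in\ker({i_{W,\mathbb Q}}_*)$, then $PD(\gamma)$ pairs to zero against everything in $H^m(X;\mathbb Q)$ that is pulled back from or dual to $W$, and combined with the dimension count in Lemma \ref{coker}(c) (which bounds the defect of $PD\circ i_{t,\mathbb Q}^\star$ by $2$) one pins down that $\gamma=0$.

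The main obstacle I anticipate is step (b): passing from the rather clean rational statement to the integral one requires knowing not merely that $H_m(X;\mathbb Z)$ is torsion free (which we have, $X$ being a smooth complete intersection) but that $H_m(W;\mathbb Z)$ has no torsion that could be killed by ${i_W}_*$ — and $W$ is singular, so one cannot invoke the Lefschetz package for $W$ directly. The resolution is to route everything through $P_L$ as in the proof of Lemma \ref{coker}: realize ${i_W}_*$ as a piece of the long exact sequences (\ref{seq})--(\ref{Blowdec}), use that $H_{m+1}(P_L\backslash X_t;\mathbb Z)$ is torsion free and that the Milnor-fiber computations (\ref{dMilnorfiber})--(\ref{tMilnorfiber}) give torsion-freeness of the local contributions, and then chase the diagram in the proof of Lemma \ref{coker}(a) to see that a torsion class in $\ker({i_W}_*)$ would have to come from $H_m(X_t\cap M_L;\mathbb Z)$, which is generated by the linear section class and maps injectively. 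Assembling these, $\ker({i_W}_*)$ is both torsion and torsion-free, hence $0$, giving (b).
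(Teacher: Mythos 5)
Part (a) of your proposal is exactly the paper's argument: a class in $I_W\cap H^m(X;\mathbb Z)$ has a multiple in $Im(PD\circ i_t^{\star})=H^m(Y;\mathbb Z)+H^m(X;\mathbb Z)_W$ (Lemma \ref{coker}(a)), and torsion-freeness of the cokernel (Lemma \ref{coker}(b)) finishes it. Part (b), however, has a genuine gap. Your proposed mechanism for rational injectivity --- nondegeneracy of the intersection pairing restricted to $Im({i_{W,\mathbb Q}}_*)$ --- cannot work as stated: nondegeneracy of a form on the \emph{image} says nothing about the \emph{kernel} in the source; if ${i_{W,\mathbb Q}}_*$ had a kernel, the image would simply be smaller and could still carry a nondegenerate pairing. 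Moreover $PD(\gamma)$ is not defined for $\gamma\in H_m(W;\mathbb Q)$ ($W$ is singular), and invoking Hodge--Riemann or Hard Lefschetz for the singular complete intersection $W$ is not legitimate. The paper's actual argument is a pure dimension count: by (\ref{inv}) and Lemma \ref{coker}(a), $I_W=Im(PD\circ i_{t,\mathbb Q}^{\star})=PD(Im({i_{W,\mathbb Q}}_*))$, so $\dim I_W\geq h_{m+2}(P_L;\mathbb Q)-2$ by Lemma \ref{coker}(c), and injectivity follows once one proves the inequality $h_{m+2}(P_L;\mathbb Q)\geq h_m(W;\mathbb Q)+2$ (equation (\ref{csuff})). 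That inequality is the real content of (b) and is entirely absent from your proposal: it requires passing to a desingularization of $P$ to get $h_{m+2}(P_L;\mathbb Q)\geq h_{m+2}(P;\mathbb Q)+1$, a Borel--Moore homology argument showing $H_{m+3}(P\backslash\widetilde W;\mathbb Q)=0$ to get $h_{m+2}(P;\mathbb Q)\geq h_{m+2}(\widetilde W;\mathbb Q)$, and Leray--Hirsch on the exceptional divisor to get $h_{m+2}(\widetilde W;\mathbb Q)\geq h_m(W;\mathbb Q)+1$.

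On the integral side you correctly identify that one needs $H_m(W;\mathbb Z)$ to be torsion free, but your substitute argument (routing a hypothetical torsion class through the sequences (\ref{seq})--(\ref{Blowdec}) and concluding it ``comes from $H_m(X_t\cap M_L;\mathbb Z)$'') is not a proof; those sequences control $H_{m+1}(P_L\backslash X_t;\mathbb Z)$, not the homology of $W$. The paper simply cites the fact that the middle homology of a complete intersection with isolated singularities is torsion free (\cite{Dimca}, (4.4) Corollary (i), p.~162), which is the clean way to dispose of this point.
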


\begin{proof} (a) By (\ref{inv}) we only have  to prove that $I_W\cap H^m(X;\mathbb
Z)\subseteq H^m(Y;\mathbb Z)+H^m(X;\mathbb Z)_W$. To this aim,
consider $\xi\in I_W\cap H^m(X;\mathbb Z)$. By (\ref{inv})  $\xi$
is a torsion element in $H^m(X;\mathbb Z)$ modulo $H^m(Y;\mathbb
Z)+H^m(X;\mathbb Z)_W$. So, by Lemma \ref{coker}, (a) and (b), we
deduce that $\xi \in H^m(Y;\mathbb Z)+H^m(X;\mathbb Z)_W$.

(b) Since $H_m(W;\mathbb Z)$ is torsion free (\cite{Dimca}, (4.4)
Corollary (i), p. 162)) then to prove (b) is equivalent to prove
that ${i_{W,\mathbb{Q}}}_*$ is injective. By (\ref{inv}) we have
$I_W=H^m(X;\mathbb Q)_W$ because $H^m(Y;\mathbb Q)$ is generated
by the linear section class which belongs also to $H^m(X;\mathbb
Q)_W$. Therefore to show (b) it suffices to show that
$h_m(W;\mathbb Q)\leq dim_{\mathbb Q }\,I_W$. On the other hand,
by Lemma \ref{coker}, (a), and (\ref{inv}), we have
$I_W=Im(PD\circ i_{t,\mathbb Q}^{\star})$. Hence $dim_{\mathbb Q
}\,I_W=h_{m+2}(P_L;\mathbb Q) - dim_{\mathbb Q }\,Ker(PD\circ
i_{t,\mathbb Q}^{\star})$, and, in view of Lemma \ref{coker}, (c),
in order to prove (b) it is enough to show that
\begin{equation}\label{csuff}
h_{m+2}(P_L;\mathbb Q)\geq h_m(W;\mathbb Q)+2.
\end{equation}
To this purpose let $R\to P$ be a desingularization of $P$, and
$R_L\to P_L$ be the induced map on the blowing-up. By
(\cite{Dimca2}, Proposition 5.4.4 p. 157, and Corollary 5.4.11 p.
161) and (\cite{Voisin}, p. 170, Th\'eor\`eme 7.31) we see that
$H^{m+2}(P;\mathbb Q)$ and $H^{m+2}(P_L;\mathbb Q)$ are naturally
embedded in $H^{m+2}(R_L;\mathbb Q)$ via pull-back. Therefore the
pull-back $H^{m+2}(P;\mathbb Q)\to H^{m+2}(P_L;\mathbb Q)$ is
injective, hence the push-forward $H_{m+2}(P_L;\mathbb Q)\to
H_{m+2}(P;\mathbb Q)$ is surjective. This map cannot be injective
because the class $\Ps^1\times H_{B_L}^{r-1}\in
H_{m+2}(P_L;\mathbb Q)\backslash\{0\}$ ($H_{B_L}=$ hyperplane
section of $B_L$) vanishes in $H_{m+2}(P;\mathbb Q)$. We deduce
\begin{equation}\label{ucsuff}
h_{m+2}(P_L;\mathbb Q)\geq h_{m+2}(P;\mathbb Q)+1.
\end{equation}
Now let $\widetilde W$ be the exceptional divisor in $P$, and
consider the following exact sequences in Borel-Moore homology
(\cite{Fulton}, p. 371, (6)):
$$
0=H_{m+3}(Y;\mathbb Q)\to H_{m+3}(Y\backslash W;\mathbb Q)\to
H_{m+2}(W;\mathbb Q)\stackrel{\nu}\to H_{m+2}(Y;\mathbb Q),
$$
$$
\qquad\qquad\qquad\qquad\,\,\,\,\,\, H_{m+3}(P\backslash
\widetilde W;\mathbb Q)\to H_{m+2}(\widetilde W;\mathbb Q)\to
H_{m+2}(P;\mathbb Q).
$$
By (\cite{Dimca}, p. 161, (4.3) Theorem (i)) we know that
$H_{m+2}(W;\mathbb Q)$ is generated by the linear section class,
so the push-forward $\nu$ is injective, and then the first
sequence proves that $H_{m+3}(Y\backslash W;\mathbb Q)=0$. Then
also $H_{m+3}(P\backslash \widetilde W;\mathbb Q)=0$ because
$P\backslash \widetilde W\cong Y\backslash W$, and from the second
sequence we obtain
\begin{equation}\label{dcsuff}
h_{m+2}(P;\mathbb Q)\geq h_{m+2}(\widetilde W;\mathbb Q).
\end{equation}
Finally, using Leray-Hirsch Theorem \cite{Spanier}, we have
\begin{equation}\label{tcsuff}
h_{m+2}(\widetilde W;\mathbb Q)\geq h_{m}(W;\mathbb Q)+1.
\end{equation}
Putting together (\ref{ucsuff}), (\ref{dcsuff}) and
(\ref{tcsuff}), we get (\ref{csuff}).
\end{proof}

\section{Proof of the announced results}

We keep the same notation we introduced before, and need further
preliminaries.

\begin{notations}\label{duenotazioni}
(i) Let $X,G_1,\dots,G_r$ be a regular sequence of divisors in
$Y$, with  $X \in |H^0(Y,\mathcal O_Y(d))|$ and $G_l\in
|H^0(Y,\mathcal O_Y(k_l))|$ for $1\leq l\leq r$. Let
$\Delta:=X\cap G_1\cap \dots\cap G_r$ be their complete
intersection, hence $dim\,\Delta =r$. Denote by
$C_1,\dots,C_{\omega}$ the irreducible components of $\Delta$.
Assume that $d>k_l$ for any $1\leq l\leq r$, that  $X$ and $G_1$
are smooth,  and that for any $2\leq l\leq r$ one has
\begin{equation}\label{altOS}
dim\,Sing(G_1\cap \dots\cap G_l)\leq l-2 \quad{\text{and}}\quad
dim\,Sing(X\cap G_1\cap \dots\cap G_l)\leq l-1.
\end{equation}
Observe that $\Delta$ verifies condition (0.1) in \cite{OS}. Put
$W:=X\cap G_1$.

(ii) Let $I_{\Delta}$ be  the subspace of the invariant cocycles
in $H^m(X;\mathbb Q)$ with respect to the monodromy representation
on $H^m(X;\mathbb Q)$ for the family of smooth divisors $X\in
|H^0(Y,\mathcal O_Y(d))|$ containing $\Delta$. Let $V_{\Delta}$ be
its orthogonal complement in $H^m(X;\mathbb Q)$. Denote by
$H^m(X;\mathbb Z)_{\Delta}$ the image of $H_m(\Delta;\mathbb Z)$
in $H^m(X;\mathbb Z)$ via the natural map $H_m(W;\mathbb Z)\to
H_m(X;\mathbb Z)\cong H^m(X;\mathbb Z)$.  In Notations
\ref{notazioni}, (viii), we already defined $I_W$ and
$H^m(X;\mathbb Z)_W$. Observe that $I_{\Delta}\subseteq I_W$ for
the monodromy group of the family of smooth divisors $X\in
|H^0(Y,\mathcal O_Y(d))|$ containing $W$ is a subgroup of the
monodromy group of the family of smooth $X\in |H^0(Y,\mathcal
O_Y(d))|$ containing $\Delta$.

(iii) For any $1\leq l\leq r-1$ fix general divisor $H_l\in
|H^0(Y,\mathcal O_Y(\mu_{l}))|$, with $0\ll \mu_1\ll\dots\ll
\mu_{r-1}$, and for any $0\leq l\leq r-1$ define
$(Y_l,X_l,W_l,\Delta_l)$ as follows. For $l=0$ put
$(Y_0,X_0,W_0,\Delta_0):=(Y,X,W,\Delta)$. For $1\leq l\leq r-1$
put $Y_l:=G_1\cap\dots\cap G_l\cap H_1\cap\dots\cap H_l$,
$X_l:=X\cap Y_l$, $W_l:=X\cap Y_l\cap G_{l+1}$, and
$\Delta_l:=\Delta\cap Y_l$. Notice that  $dim\,Y_{r-1}=3$ and that
$\Delta_{r-1}=W_{r-1}$.
\end{notations}

\begin{theorem}\label{thmb}
Let $X\in |H^0(Y,\mathcal O_Y(d))|$ be a general divisor
containing $\Delta$. Then $H^m(Y;\mathbb Z)+H^m(X;\mathbb
Z)_{\Delta}$ is freely generated by $C_1,\dots,C_{\omega -1}$ and
the linear section $H^r_X$. Moreover $I_{\Delta}\cap H^m(X;\mathbb
Z)=H^m(Y;\mathbb Z)+H^m(X;\mathbb Z)_{\Delta}=H^m(Y;\mathbb
Z)+H^m(X;\mathbb Z)_{W}$, and the monodromy representation on
$V_{\Delta}$ for the family of smooth divisors in $|H^0(Y,\mathcal
O_Y(d))|$ containing $\Delta$ is irreducible.
\end{theorem}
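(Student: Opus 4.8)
The argument is organized around the chain $Y=Y_0\supset Y_1\supset\dots\supset Y_{r-1}$ of Notations \ref{duenotazioni}, with Theorem \ref{thma} as the engine at the integral level and the weak and hard Lefschetz theorems transporting the assertions down to the base case $l=r-1$, where $\dim X_{r-1}=2$ and $\Delta_{r-1}=W_{r-1}$.

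Consider first the two ``linear algebra'' equalities. Since $\Delta\subseteq W$ we have $H^m(X;\mathbb Z)_\Delta\subseteq H^m(X;\mathbb Z)_W$ and $I_\Delta\subseteq I_W$ (the family of divisors containing $\Delta$ is larger than the one containing $W$, so its monodromy group is larger and its invariant subspace smaller). As $H^m(Y;\mathbb Z)$ and $H^m(X;\mathbb Z)_\Delta$ consist of monodromy-invariant integral classes, we obtain
\[
H^m(Y;\mathbb Z)+H^m(X;\mathbb Z)_\Delta\ \subseteq\ I_\Delta\cap H^m(X;\mathbb Z)\ \subseteq\ I_W\cap H^m(X;\mathbb Z)\ =\ H^m(Y;\mathbb Z)+H^m(X;\mathbb Z)_W ,
\]
the last equality being Theorem \ref{thma}(a). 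Hence both equalities of the theorem follow once we know $H^m(X;\mathbb Z)_W\subseteq H^m(Y;\mathbb Z)+H^m(X;\mathbb Z)_\Delta$; in view of the injectivity of $i_{W*}$ (Theorem \ref{thma}(b)), this is the assertion that the image of $H_m(W;\mathbb Z)$ in $H^m(X;\mathbb Z)$ is spanned by $H^r_X$ and the fundamental classes $[C_1],\dots,[C_\omega]$ of the top-dimensional components of $\Delta=W\cap G_2\cap\dots\cap G_r$. This I prove by downward induction along the chain: for $l=r-1$ it is trivial, since $W_{r-1}=\Delta_{r-1}$ is a reduced curve whose integral $H_2$ is free on its components; for the step from $l+1$ to $l$ one uses $W_l=X_l\cap G_{l+1}$ and $X_{l+1}=W_l\cap H_{l+1}$ with $\deg H_{l+1}\gg 0$, so that weak Lefschetz (restriction $H^j(X_l;\mathbb Z)\to H^j(W_l;\mathbb Z)$ an isomorphism below the middle, and $H^j(W_l;\mathbb Z)\hookrightarrow H^j(X_{l+1};\mathbb Z)$ in the middle) together with hard Lefschetz on $X_l$ identifies the non-primitive part of $H^{m-2l}(X_l;\mathbb Z)$, modulo the hyperplane class, with a subgroup of the middle cohomology of $X_{l+1}$, where the inductive hypothesis applies; since the components of $\Delta_{l+1}$ are the general linear sections of those of $\Delta_l$, the generators correspond. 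The delicate point is controlling torsion under these Gysin identifications, which one does with the torsion-freeness of complete intersections with isolated singularities already used in Lemma \ref{coker}.

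The free-generation statement is then formal: on $X$ one has the relation $[C_1]+\dots+[C_\omega]=(k_1\cdots k_r)H^r_X$ (a general $X$ can be chosen so that $\Delta$ is reduced), and it is the only relation among $H^r_X,[C_1],\dots,[C_\omega]$, because the injectivity of $i_{W*}$ and the induction show that these classes span a free subgroup of $H^m(X;\mathbb Z)$ of rank exactly $\omega$. Discarding $[C_\omega]$, whose coefficient in the relation is a unit, yields the asserted free basis $H^r_X,[C_1],\dots,[C_{\omega-1}]$.

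There remains the monodromy irreducibility, which is the crux and cannot be quoted from \cite{DGF} directly: for $Z=\Delta$ the sheaf $\ic_{\Delta,Y}(\delta)$ is globally generated only for $\delta\ge d$, so the hypothesis $d\ge\delta+1$ of \cite{DGF} is violated. The complete-intersection hypothesis is precisely what replaces it, by supplying the needed degenerations. Put $T:=G_1\cap\dots\cap G_r$; since $T\subseteq G_r$ one has $|\ic_{\Delta,Y}(\delta)|=|\ic_{T,Y}(\delta)|\neq\emptyset$ already for $k_r\le\delta<d$, so a general $X\in|\ic_{\Delta,Y}(d)|$ moves, within the family of divisors containing $\Delta$, to a reducible divisor $X'\cup X''$ with $X'\supseteq T\supseteq\Delta$ of degree $\delta$ and $X''$ general of degree $d-\delta$. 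Feeding these degenerations into the Picard-Lefschetz and connectedness analysis of \cite{OS} and \cite{DGF}, bootstrapped down the chain to the surface case (where it becomes the Noether-Lefschetz argument behind \cite{Lopez}), one shows that all vanishing cycles of the Lefschetz pencils inside the $\Delta$-family are conjugate under the monodromy. Since $V_\Delta$ is the orthogonal complement of the invariant part $I_\Delta=H^m(Y;\mathbb Q)+H^m(X;\mathbb Q)_\Delta$, it equals the span of these vanishing cycles, and their conjugacy forces $V_\Delta$ to be an irreducible monodromy module (or zero). Proving this conjugacy in the constrained setting is the main obstacle.
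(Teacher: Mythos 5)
Your opening reductions are correct and agree with the paper's: by Theorem \ref{thma}, (a), both displayed equalities follow from the single containment $H^m(X;\mathbb Z)_W\subseteq H^m(Y;\mathbb Z)+H^m(X;\mathbb Z)_\Delta$, and the free-generation claim reduces to the injectivity of the push-forward $H_m(\Delta;\mathbb Z)\to H_m(X;\mathbb Z)$ (which the paper obtains by factoring the bijective Gysin map $H_m(\Delta;\mathbb Z)\to H_{m-2}(\Delta_1;\mathbb Z)$ through $H_m(W;\mathbb Z)$ and inducting on $r$ via Theorem \ref{thma}, (b)). But neither of the two substantive claims is actually established. For the containment, your downward induction does not close: the inductive hypothesis at level $l+1$ controls the image of $H_{m_{l+1}}(W_{l+1};\mathbb Z)$ in $H^{m_{l+1}}(X_{l+1};\mathbb Z)$, whereas the Gysin map $H_{m_l}(W_l;\mathbb Z)\to H^{m_{l+1}}(X_{l+1};\mathbb Z)$ coming from $X_{l+1}=W_l\cap H_{l+1}$ has an image that weak and hard Lefschetz alone do not relate to $W_{l+1}$ or to $\Delta_{l+1}$. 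The paper closes this step with a monodromy argument: that Gysin image is injective and globally invariant; by induction the integral invariants are exactly $H^{m_{l+1}}(Y_{l+1};\mathbb Z)+H^{m_{l+1}}(X_{l+1};\mathbb Z)_{\Delta_{l+1}}$; and if the image were any larger it would meet $V_{\Delta_{l+1}}$, hence by irreducibility contain it, hence equal all of $H^{m_{l+1}}(X_{l+1};\mathbb Q)$ --- impossible for $\mu_{l+1}\gg 0$ by a Betti-number count. So the containment cannot be decoupled from the irreducibility as you attempt to do.

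The more serious gap is the irreducibility of $V_\Delta$ itself, which you correctly identify as the crux but then leave open (``proving this conjugacy in the constrained setting is the main obstacle''). You also misjudge the route: it is true that the general $d\geq\delta+1$ theorem of \cite{DGF} cannot be applied to $Z=\Delta$, but the paper never applies it to $\Delta$. Instead, at each level it deduces from the containment \eqref{ninduzione} and Theorem \ref{thma}, (a), the equality $I_{\Delta_l}=I_{W_l}$, hence $V_{\Delta_l}=V_{W_l}$; the irreducibility of $V_{W_l}$ for $W_l=X_l\cap G_{l+1}$ --- the intersection of $X_l$ with a \emph{single} smooth hypersurface of degree $k_{l+1}<d$ --- is precisely the case covered by \cite{DGF}, Theorem 1.1 and (11), and it transfers to the larger monodromy group of the $\Delta_l$-family. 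Thus the two inductive claims are proved simultaneously, each feeding the other at consecutive levels, with the base case $\Delta_{r-1}=W_{r-1}$ settled directly by \cite{DGF}. No new vanishing-cycle conjugacy analysis is required, and your sketch of one (degenerating to $X'\cup X''$ and reworking the Picard--Lefschetz theory of \cite{OS}) remains a program rather than a proof.
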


\begin{proof} Since $H_m(\Delta;\mathbb Z)$ is freely generated by
$C_1,\dots,C_{\omega}$ then to prove the first assertion  is
equivalent to prove that the push-forward $H_{m}(\Delta;\mathbb
Z)\to H_{m}(X;\mathbb Z)$ is injective. When $r=1$ this follows by
Theorem \ref{thma}, (b), because in this case $W=\Delta$. Now
argue by induction on $r\geq 2$. Since $\Delta_1=\Delta \cap H_1$
then $\Delta$ and $\Delta_{1}$ have the same number of components.
Therefore the Gysin map $H_{m}(\Delta;\mathbb Z)\to
H_{m-2}(\Delta_1;\mathbb Z)$ is bijective, and its composition
$\varphi$ with the push-forward $H_{m-2}(\Delta_1;\mathbb Z)\to
H_{m-2}(X_1;\mathbb Z)$ is injective by induction. On the other
hand $\varphi$ is nothing but the composition of the push-forward
$H_{m}(\Delta;\mathbb Z)\to H_{m}(W;\mathbb Z)$ with the Gysin map
$H_{m}(W;\mathbb Z)\to H_{m-2}(X_1;\mathbb Z)$ (observe that
$X_1=W\cap H_1$). Hence the map $H_{m}(\Delta;\mathbb Z)\to
H_{m}(W;\mathbb Z)$ is injective, and so is the map
$H_{m}(\Delta;\mathbb Z)\to H_{m}(X;\mathbb Z)$ by Theorem
\ref{thma}, (b), again.

As for the remaining claims, note that since $I_{W}\supseteq
I_{\Delta}$ and $I_{\Delta}\cap H^m(X;\mathbb Z)\supseteq
H^m(Y;\mathbb Z)+H^m(X;\mathbb Z)_{\Delta}$, by Theorem
\ref{thma}, (a), it suffices to prove that $H^m(X;\mathbb Z)_{W}$
$\subseteq H^m(Y;\mathbb Z)+H^m(X;\mathbb Z)_{\Delta}$, and that
$V_{\Delta}$ is irreducible. So it is enough to show that for any
$0\leq l\leq r-1$ one has
\begin{equation}\label{ninduzione}
H^{m_l}(X_l;\mathbb Z)_{W_l}\subseteq H^{m_l}(Y_l;\mathbb
Z)+H^{m_l}(X_l;\mathbb Z)_{{\Delta}_l}
\end{equation}
($m_l:=m-2l$), and that the monodromy representation on
$V_{\Delta_{l}}$ for the family of smooth divisors $X_l\in
|H^0(Y_l,\mathcal O_{Y_l}(d))|$ containing $\Delta_l$ is
irreducible. To this purpose we argue by decreasing induction on
$l$. When $l=r-1$ we have $\Delta_{r-1}=W_{r-1}$. In this case
(\ref{ninduzione}) is obvious, and by (\cite{DGF}, Theorem 1.1) we
know that $V_{\Delta_{r-1}}$ is irreducible. Now assume $0\leq
l<r-1$. By induction we have
\begin{equation}\label{nninduzione}
I_{{\Delta}_{l+1}}\cap H^{m_{l+1}}(X_{l+1};\mathbb
Z)=H^{m_{l+1}}(Y_{l+1};\mathbb Z)+H^{m_{l+1}}(X_{l+1};\mathbb
Z)_{{\Delta}_{l+1}}.
\end{equation}
Since $X_{l+1}=W_l\cap H_{l+1}$ then the inclusion map
$i_{X_{l+1}}:X_{l+1}\to W_l$ defines a Gysin map
$i_{X_{l+1}}^{\star}:H_{m_l}(W_l;\mathbb Z)\to
H_{m_{l+1}}(X_{l+1};\mathbb Z)$.  Using the same argument as in
the proof of (\cite{DGF}, Lemma 2.3) we see that $PD\circ
i_{X_{l+1}}^{\star}$ is injective (recall that
$H_{m_l}(W_l;\mathbb Z)$ is torsion free by (\cite{Dimca}, (4.4)
Corollary (i), p. 162)), and that its image is globally invariant.
Since $\Delta_{l}\subseteq W_l$,  $\Delta_{l+1}=\Delta_{l}\cap
H_{l+1}$ (hence the Gysin map $H_{m_l}(\Delta_{l};\mathbb Z)\to
H_{m_{l+1}}(\Delta_{l+1};\mathbb Z)$ is bijective because both
groups are freely generated by the irreducible components), and by
Lefschetz Hyperplane Theorem we have $H^{m_{l+1}}(Y_{l};\mathbb
Z)\cong H^{m_{l+1}}(Y_{l+1};\mathbb Z)$, then $Im\,(PD\circ
i_{X_{l+1}}^{\star})\supseteq H^{m_{l+1}}(Y_{l+1};\mathbb
Z)+H^{m_{l+1}}(X_{l+1};\mathbb Z)_{{\Delta}_{l+1}}$. It follows
that these groups are equal, i.e.
\begin{equation}\label{xinduzione}
H_{m_l}(W_l;\mathbb Z)\cong Im\,(PD\circ i_{X_{l+1}}^{\star})=
H^{m_{l+1}}(Y_{l+1};\mathbb Z)+H^{m_{l+1}}(X_{l+1};\mathbb
Z)_{{\Delta}_{l+1}}.
\end{equation}
In fact, otherwise, by (\ref{nninduzione}) one would have
$V_{\Delta_{l+1}}\cap Im\,(PD\circ i_{X_{l+1},\mathbb
Q}^{\star})\neq \{0\}$, and since $V_{\Delta_{l+1}}$ is
irreducible it would follow that $H_{m_l}(W_l;\mathbb Q)\cong
H^{m_{l+1}}(X_{l+1};\mathbb Q)$. This is impossible because, for
$0\ll \mu_1\ll\dots\ll \mu_{l+1}$, $h_{m_{l+1}}(X_{l+1};\mathbb
Q)$ is arbitrarily large with respect to $h_{m_l}(W_l;\mathbb Q)$.
From (\ref{xinduzione}) we get (\ref{ninduzione}) for the natural
map $H_{m_l}(W_{l};\mathbb Z)\to H_{m_l}(X_{l};\mathbb Z)\cong
H^{m_l}(X_{l};\mathbb Z)$ sends $H^{m_{l+1}}(Y_{l+1};\mathbb Z)$
in $H^{m_{l}}(Y_{l};\mathbb Z)$ and $H^{m_{l+1}}(X_{l+1};\mathbb
Z)_{{\Delta}_{l+1}}$ in $H^{m_{l}}(X_{l};\mathbb
Z)_{{\Delta}_{l}}$. Finally note that by Theorem \ref{thma}  and
(\ref{ninduzione}) it follows $I_{\Delta_l}=I_{W_l}$. So
(\cite{DGF}, Theorem 1.1, and (11)) implies  $V_{\Delta_{l}}$ is
irreducible.
\end{proof}

\smallskip

We are in position to prove the results announced in the
Introduction.

\begin{proof}[Proof of Theorem \ref{thm1}]
Let $G_1,\dots, G_r$ be general divisors in $|H^0(Y,\mathcal
O_Y(\delta))|$ containing $Z$.  By (\cite{OS}, 1.2. Theorem) we
know that  $X,G_1,\dots, G_r$ is a regular sequence, verifying
conditions (\ref{altOS}). Put $\Delta:=X\cap G_1\cap\dots\cap
G_r$. Hence $\Delta$ is a complete intersection  of dimension $r$
containing $Z$,  and again by (\cite{OS}, 1.2. Theorem) we know
that $\Delta\backslash Z$ is smooth and connected. Observe that
$\Delta\neq Z_1\cup\dots\cup Z_{\rho}$, otherwise $Z=\Delta$ and
this is in contrast with the assumption that $\mathcal
I_{Z,Y}(\delta)$ is generated by global sections. Therefore, apart
from $Z_1,\dots ,Z_{\rho}$, $\Delta$ has a unique residual
irreducible component, and so $H^m(Y;\mathbb Z)+H^m(X;\mathbb
Z)_{\Delta}$ is freely generated by $Z_1,\dots ,Z_{\rho},H^r_X$ by
Theorem \ref{thmb}. Again by Theorem \ref{thmb} we deduce that
$I_{\Delta}$ is equal to the subspace $I_Z$ of the invariant
cocycles in $H^m(X;\mathbb Q)$ with respect to the monodromy
representation on $H^m(X;\mathbb Q)$ for the family of smooth
divisors in $|H^0(Y,\mathcal O_Y(d))|$ containing $Z$. Since
${I_Z}^{\perp}$ ($=V_{\Delta}$) is irreducible,  a standard
argument shows that $NS(X)\subseteq I_{\Delta}$, i.e.
$NS(X)=H^m(Y;\mathbb Z)+H^m(X;\mathbb Z)_{\Delta}$.
\end{proof}

\begin{proof}[Proof of Theorem \ref{thm2}]
Put $\Delta := X_t\cap G_1\cap\dots\cap G_r$. Since
$X_t,G_1,\dots,G_r$ are smooth and $d>k_1>\dots >k_r$ then the
conditions (\ref{altOS}) are verified (\cite{Vogel}, p. 133,
Proposition 4.2.6. and proof). Therefore Theorem \ref{thmb}
applies to $\Delta$. We deduce that $NS(X_t)=H^m(Y;\mathbb
Z)+H^m(X_t;\mathbb Z)_{\Delta}$, hence $NS(X_t)$ is freely
generated by $H^r_{X_t},Z_1,\dots,Z_{\rho},R_1,\dots,R_{\sigma -1
}$.
\end{proof}

\bigskip {\bf{Aknowledgements}}

We would like to thank Ciro Ciliberto for valuable discussions and
suggestions on the subject of this paper.

\end{document}